\begin{document}
\title[A criterion of solvability of the elliptic Cauchy problem]
{A criterion of solvability of the elliptic Cauchy problem in a
multi-dimensional cylindrical domain }

\author[T.\,Sh.~Kalmenov, M.\,A.~Sadybekov and
B.\,T.~Torebek\hfil \hfilneg] {Tynysbek~Sh.~Kalmenov,
Makhmud~A.~Sadybekov and
Berikbol~T.~Torebek}  

\address{Tynysbek~Sh.~Kalmenov \newline
Department of Differential Equations, \newline Institute of
Mathematics and Mathematical Modeling.\newline 125 Pushkin str.,
050010 Almaty, Kazakhistan} \email{kalmenov.t@mail.ru}

\address{Makhmud~A.~Sadybekov \newline
Department of Differential Equations, \newline Institute of
Mathematics and Mathematical Modeling.\newline 125 Pushkin str.,
050010 Almaty, Kazakhistan} \email{sadybekov@math.kz}

\address{Berikbol T. Torebek \newline
Department of Differential Equations, \newline Institute of
Mathematics and Mathematical Modeling.\newline 125 Pushkin str.,
050010 Almaty, Kazakhistan} \email{torebek@math.kz}

\subjclass[2000]{31A30, 31B30, 35J40} \keywords{elliptic Cauchy
problem, self-adjoint operator, elliptic operator with deviating
argument}

\begin{abstract}
In this paper we consider the Cauchy problem for multidimensional
elliptic equations in a cylindrical domain. The method of spectral
expansion in eigenfunctions of the Cauchy problem for equations
with deviating argument establishes a criterion of the strong
solvability of the considered elliptic Cauchy problem. It is shown
that the ill-posedness of the elliptic Cauchy problem is
equivalent to the existence of an isolated point of the continuous
spectrum for a self-adjoint operator with deviating
argument.\end{abstract}

\maketitle \numberwithin{equation}{section}
\newtheorem{theorem}{Theorem}
\newtheorem{corollary}{Corollary}
\newtheorem{lemma}{Lemma}
\newtheorem{remark}{Remark}
\newtheorem{problem}{Problem}
\newtheorem{example}{Example}
\newtheorem{definition}{Definition}
\allowdisplaybreaks

\section{Introduction}\label{sec1}
As it is known, the solution of the Cauchy problem for the Laplace
equation is unique, but unstable. First of all it should be noted
that the existence and uniqueness of its solution is essentially
guaranteed by the universal Cauchy - Kovalevskaja theorem, which
holds for Elliptic Problems. However, the existence of the
solution is guaranteed only in a small. Traditionally the
ill-posedness of the elliptic Cauchy problem is determined in
relation to its equivalence to Fredholm integral equations of the
first kind. The problem of solving the operator equation of the
first kind can not be correct, since the operator which is inverse
to completely continuous operator is not continuous. The Cauchy
problem for the Laplace equation is one of the main examples of
ill-posed problems. One can pick up the harmonic functions with
arbitrarily small Cauchy data on a piece of the domain boundary,
which will be arbitrarily large in the domain (the famous example
of Hadamard) \cite{hadamard23}. For the formulation of the problem to be
correct, it is necessary to narrow down the class of solutions.
The stability of a plane problem in the class of bounded solutions
firstly was proved by Carleman \cite{carleman26}. From Carleman's results
immediately follow estimations characterizing this stability. In
the mentioned work Carleman obtained a formula for determinating
an analytic function of a complex variable by its values on some
piece of the arc. However, this formula is unstable and therefore
can not be directly used as an efficient method. The first results
related to the construction of an efficient algorithm for solving
the problem, best of our knowledge, are published simultaneously
in works Carlo Pucci \cite{pucci55} and M.M. Lavrentev \cite{lavrentev55}. Estimates
characterizing the stability of a spatial problem in the class of
bounded solutions, were first obtained by M.M. Lavrentev \cite{lavrentev55} for
harmonic functions, given in a straight cylinder and vanishing on
the generators. The Cauchy data were given on the base of the
cylinder. A little later, similar estimates were obtained by S.N.
Mergelyan \cite{mergelyan56} for the functions within a sphere and by M.M.
Lavrentev \cite{lavrentev57} for an arbitrary spatial domain with sufficiently
smooth boundary. Around the same time, E.M. Landis \cite{landis63} obtained
estimates characterizing the stability of spatial problem for an
arbitrary elliptic equation. The above results laid the foundation
for the theory of ill-posed Cauchy problems for elliptic
equations. By now this theory has deep development both for the
plane, and for the spatial cases, and also for general elliptic
equations of high order, etc. Methods of regularization and
solutions of ill-posed problems have been proposed in \cite{tikhonovarsenin79}-\cite{klibanov15}. In
these works the concept of conditional correctness of such
problems is introduced and algorithms for constructing their
solutions are proposed. In contrast to the presented results, in
this paper a new criterion of well-posedness (ill-posedness)
initial boundary value problem for a general an unbounded equation
is proved. The principal difference of our work from the work of
other authors is the application of spectral problems for
equations with deviating argument in the study of ill posed
boundary value problems. The present work is an extension of
results \cite{kalmenoviskakova07}-\cite{kalmenoviskakova09} on the case of more general unbounded operators in
a multidimensional cylindrical domain.

\section{Formulation of the problem and main results}
Let $ \Omega  \subset R^n $ be a bounded domain with smooth
boundary $\partial \Omega $ and $ D = \Omega  \times \left( {0,1}
\right) $ is a cylinder. Let $ \mathcal{L}_x $ be a self-adjoint unbounded
operator in $ L{}_2\left( \Omega  \right) $ with compact resolvent
satisfying the Friedrichs type inequality $ \left( {\mathcal{L}_x u,u}
\right) \ge \left\| u \right\|^2 .$ By $ \mu _k $ we denote all
eigenvalues (numbered in decreasing order) and by $ u_k \left( x
\right),k \in N $ we denote a complete system of all orthonormal
eigenfunctions of the operator $ \mathcal{L}_x $ in $ L{}_2\left( \Omega
\right) .$ In $ D $ we consider a mixed Cauchy problem for
elliptic equations \begin{equation}\label{1}\mathbb{L}u \equiv u_{tt}
\left( {t,x} \right) - \mathcal{L}_x u\left( {t,x} \right) = f\left( {t,x}
\right),\left( {t,x} \right) \in D,\end{equation} with initial
conditions \begin{equation}\label{2}u\left( {0,x} \right) = u_t
\left( {0,x} \right) = 0,x \in \Omega.\end{equation} and for every
$ t \in \left( {0,1} \right) $ satisfying a condition $ u \in
\mathcal{D}\left( {\mathcal{L}_x } \right).$

\begin{definition} The function $ u \in L_2 (D) $
we will call a strong solution of the mixed Cauchy problem
\eqref{1}, \eqref{2}, if there exists a sequence of functions $
u_n  \in C^2 \left( {\bar D} \right) $ satisfying conditions
\eqref{2} and (for every $ t \in \left( {0,1} \right) $ )
belonging to $ \mathcal{D}\left( {\mathcal{L}_x } \right) ,$ such that $ u_n $ and $
Lu_n $ converge in the norm  $ L_2 (D) $ respectively to $ u $ and
$ f .$\end{definition}

In the future, the following eigenvalue problem for an elliptic
equation with deviating argument will play an important role.

Find numerical values of $ \lambda $ (eigenvalues), under which
the problem for a differential equation with deviating argument
\begin{equation}\label{3} \mathbb{L}u \equiv u_{tt} \left( {x,t} \right) -
\mathcal{L}_x u\left( {x,t} \right) = \lambda u\left( {x,1 - t}
\right),\left( {x,t} \right) \in D,\end{equation} has nonzero
solutions (eigenfunctions) satisfying conditions \eqref{2}.
Obviously, the equivalent record of equation \eqref{3} has the form
$$\mathbb{L}Pu \equiv P\left( {u_{tt} \left( {t,x} \right) - \mathcal{L}_x u\left(
{t,x} \right)} \right) = \lambda u\left( {t,x} \right),\left(
{t,x} \right) \in D, $$ where $ Pu(t,x) = u(1 - t,x) $ is a
unitary operator.
\begin{theorem}\label{th1.1} The spectral Cauchy problem \eqref{3}, \eqref{2} has a
complete orthonormal system of eigenfunctions
\begin{equation}\label{4}u_{km} (x,t) = u_k \left( x \right) \cdot
v_{km} \left( t \right),\end{equation} where $ k,m \in
\mathbb{N},$ $v_{km} (t) $ are non-zero solutions of the problem
\begin{equation}\label{5} v''_{km} \left( t \right) - \mu _k
v_{km} \left( t \right) = \lambda _{km} v_{km} \left( {1 - t}
\right),{\rm{ }}0 < t < 1,\end{equation} \begin{equation}\label{6}
v_{km} \left( 0 \right) = v'_{km} \left( 0 \right) =
0,\end{equation} and $ \lambda _{km}$ are eigenvalues of problem
\eqref{3}, \eqref{2}. In addition for the large $ k $ the smallest
eigenvalue $ \lambda _{k1} $ has the asymptotic behavior
\begin{equation}\label{7}\lambda _{k1}  = 4\mu_k e^{ - \sqrt{\mu_k}} \left(
{1 + o(1)} \right)\,.\end{equation}\end{theorem}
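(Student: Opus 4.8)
The plan is to reduce the spectral Cauchy problem to a family of one-dimensional problems by separation of variables against the eigenbasis $\{u_k(x)\}$ of $\mathcal{L}_x$, then analyze the resulting non-local ODE eigenvalue problems (5)--(6), and finally extract the asymptotics (7) by a careful study of the characteristic equation for large $\mu_k$.

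First I would justify the separation of variables. Since $\{u_k\}$ is a complete orthonormal system in $L_2(\Omega)$, any $u\in L_2(D)$ with $u(\cdot,t)\in\mathcal{D}(\mathcal{L}_x)$ expands as $u(x,t)=\sum_k c_k(t)u_k(x)$ with $c_k(t)=(u(\cdot,t),u_k)$. Plugging the ansatz (4) into (3) and using $\mathcal{L}_x u_k=\mu_k u_k$, the product $u_k(x)v(t)$ solves (3), (2) precisely when $v$ solves (5), (6). Because the operator $\mathbb{L}P$ is (formally) self-adjoint and the $x$-variable decouples, the eigenfunctions of the full problem are exactly these products, and their completeness in $L_2(D)$ follows from completeness of $\{u_k\}$ in $L_2(\Omega)$ together with completeness of the eigenfunctions of each one-dimensional problem (5)--(6) in $L_2(0,1)$; orthonormality of the $u_{km}$ reduces to orthonormality of $\{u_k\}$ times that of $\{v_{km}\}_m$ for fixed $k$. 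I would remark that the one-dimensional non-local operator $v\mapsto (v''-\mu_k v)$ subject to (6), read as $\lambda v(1-t)= (\mathbb{L}v)(t)$, i.e.\ $\lambda v = P(\mathbb{L}v)$, is self-adjoint in $L_2(0,1)$ with the Cauchy boundary conditions (this is where the Friedrichs inequality $(\mathcal{L}_x u,u)\ge\|u\|^2$, giving $\mu_k\ge 1>0$, is used to make the relevant resolvent/quadratic-form arguments go through), so it has a real discrete spectrum $\{\lambda_{km}\}_{m\in\mathbb{N}}$ with a complete orthonormal eigensystem.

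The core computation is solving (5)--(6) explicitly. Writing $w(t)=v(1-t)$, the pair $(v,w)$ satisfies the coupled linear ODE system $v''-\mu v=\lambda w$, $w''-\mu w=\lambda v$ (the second obtained by substituting $t\mapsto 1-t$), with $v(0)=v'(0)=0$ and $w(1)=w'(1)=0$ — equivalently $v(1)=-v'(1)$ ... more precisely $w(1)=v(0)=0$, $w'(1)=-v'(0)=0$. Diagonalizing the system via $v\pm w$ gives decoupled equations $(v+w)''=(\mu+\lambda)(v+w)$ and $(v-w)''=(\mu-\lambda)(v-w)$, hence $v$ is a combination of $\cosh,\sinh$ with arguments $\sqrt{\mu+\lambda}\,t$ and $\sqrt{\mu-\lambda}\,t$ (for $|\lambda|<\mu$; the cases $|\lambda|\ge\mu$ are handled similarly with trigonometric functions). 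Imposing $v(0)=v'(0)=0$ and the symmetry constraint $w(t)=v(1-t)$ yields a transcendental characteristic equation in $\lambda$ and $\mu$. I would carry this out to obtain an equation of the schematic form $\sqrt{\mu-\lambda}\,\tanh\!\bigl(\tfrac12\sqrt{\mu+\lambda}\bigr)=\pm\sqrt{\mu+\lambda}\,\tanh\!\bigl(\tfrac12\sqrt{\mu-\lambda}\bigr)$ (and a companion with $\coth$), whose smallest positive root $\lambda_{k1}$ is then estimated.

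For the asymptotics (7), set $\mu=\mu_k\to\infty$ and look for the smallest eigenvalue, which turns out to be exponentially small: write $\lambda=4\mu e^{-\sqrt\mu}(1+\varepsilon)$ with $\varepsilon\to 0$. Then $\sqrt{\mu\pm\lambda}=\sqrt\mu\,(1+O(e^{-\sqrt\mu}))$, so $\tanh(\tfrac12\sqrt{\mu\pm\lambda})=1-2e^{-\sqrt{\mu\pm\lambda}}+\cdots$, and expanding both sides of the characteristic equation to leading exponential order, the $O(1)$ terms cancel and the balance at order $e^{-\sqrt\mu}$ forces $\lambda\sim 4\mu e^{-\sqrt\mu}$; collecting the next-order corrections shows the relative error is $o(1)$, giving (7). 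The main obstacle I anticipate is precisely this last step: controlling the exponential expansion of the characteristic equation uniformly enough to isolate the leading term and show all competing contributions are genuinely lower order — in particular verifying that $\lambda_{k1}$ really is the smallest eigenvalue (not, say, a negative one of larger modulus) requires a monotonicity or sign analysis of the characteristic function on $(0,\mu_k)$. A secondary technical point is the rigorous justification that the product eigenfunctions are complete in $L_2(D)$, which I would phrase as a tensor-product Hilbert space argument: $L_2(D)\cong L_2(\Omega)\otimes L_2(0,1)$ and the eigenbasis of a decoupled self-adjoint operator on a tensor product is the tensor product of the factor eigenbases.
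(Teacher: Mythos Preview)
Your overall strategy matches the paper's: separate variables against the $u_k$, reduce to the one-dimensional problems \eqref{5}--\eqref{6}, derive a transcendental characteristic equation, and read off the asymptotics of its smallest root. The tensor-product argument for completeness in $L_2(D)$ is exactly what the paper invokes.

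Two points of execution differ and are worth knowing. First, for the completeness and self-adjointness of the one-dimensional problem, you assert that $v\mapsto P(v''-\mu_k v)$ with the Cauchy data at $t=0$ is self-adjoint and appeal to unspecified resolvent/quadratic-form arguments; this is delicate because Cauchy initial conditions are not self-adjoint boundary conditions for $d^2/dt^2-\mu_k$. The paper sidesteps this by passing to the inverse: it writes the problem as $v=\lambda L_C^{-1}Pv$, gives $L_C^{-1}$ explicitly as a Volterra integral operator with kernel $\mu_k^{-1/2}\sinh\sqrt{\mu_k}(t-\xi)$, and checks the algebraic identity $L_C^{-1}P=P(L_C^{-1})^*=(L_C^{-1}P)^*$, so $L_C^{-1}P$ is a compact self-adjoint (Hilbert--Schmidt) operator and the spectral theorem applies immediately. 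The Friedrichs inequality plays no role here. Second, for the asymptotics \eqref{7} the paper does not expand $\tanh$ directly; instead it takes the logarithm of the characteristic equation, computes $\varpi_k(0)$ and $\varpi_k'(0)$ exactly, bounds $\varpi_k''$ uniformly on a small interval (this is the monotonicity/sign analysis you anticipated), and then applies Rouch\'e's theorem on the circle $|\lambda|=4\mu_k e^{-\sqrt{\mu_k}}(1+\varepsilon)$ to conclude that the zero of $\varpi_k$ and of its linear part agree to the stated order. Your direct-expansion plan would also work, but the Rouch\'e argument gives a clean mechanism for certifying that there is exactly one root in the relevant region.
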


\begin{theorem}\label{th1.2} A strong solution of the mixed Cauchy problem \eqref{1} -
\eqref{2} exists if and only if $ f\left( {x,\,t} \right) $
satisfies the inequality \begin{equation}\label{8} \sum\limits_{k
= 1}^\infty  {\left| {\frac{{\tilde f_{k1} }}{{\lambda _{k1} }}}
\right|} ^2  < \infty ,\end{equation} where $ \tilde f_{km}  =
\left( {f(x,\,1 - t),\,u_{km} (x,\,t)} \right) .$

If condition \eqref{8} holds, then a solution of problem \eqref{1}
- \eqref{2} can be written as \begin{equation}\label{9} u\left(
{x,t} \right) = \sum\limits_{k = 1}^\infty {\frac{{\tilde f_{k1}
}}{{\lambda _{k1} }}u_{k1} \left( {x,\,t} \right)}  +
\sum\limits_{k = 1}^\infty {\sum\limits_{m = 2}^\infty
{\frac{{\tilde f_{km} }}{{\lambda _{km} }}u_{km} } } \left( {x,t}
\right).
\end{equation}\end{theorem}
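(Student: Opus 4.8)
The plan is to diagonalize the Cauchy problem with respect to the complete orthonormal system $\{u_{km}\}$ provided by Theorem~\ref{th1.1}. Writing a candidate solution as $u=\sum_{k,m}c_{km}u_{km}$ and using $\mathbb{L}u_{km}=\lambda_{km}Pu_{km}$, the equation $\mathbb{L}u=f$ becomes, upon expanding $f$ in the (also orthonormal, since $P$ is unitary on $L_2(D)$) basis $\{Pu_{km}\}$, the relations $c_{km}\lambda_{km}=\tilde f_{km}$; since $\lambda=0$ is not an eigenvalue of \eqref{5}--\eqref{6}, this forces $c_{km}=\tilde f_{km}/\lambda_{km}$. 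So formally the unique solution is the series \eqref{9}, and it lies in $L_2(D)$ exactly when $\sum_{k,m}|\tilde f_{km}/\lambda_{km}|^2<\infty$. The theorem asserts that this is equivalent to \eqref{8}, i.e. that only the first branch $\lambda_{k1}$ matters.

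To turn this into a proof I would first record two facts about the eigenvalues. Applying $P$ to \eqref{3}, and using that $P$ commutes with $\partial_{tt}$ and with $\mathcal{L}_x$, gives $\mathbb{L}Pu_{km}=P\mathbb{L}u_{km}=\lambda_{km}u_{km}$, equivalently $\partial_{tt}(Pu_{km})-\mathcal{L}_x(Pu_{km})=\lambda_{km}u_{km}$. Second --- and this is the step I expect to be the main obstacle --- one needs a uniform separation $|\lambda_{km}|\ge\delta>0$ for all $k\ge1$ and all $m\ge2$, only $\lambda_{k1}$ accumulating at $0$ according to \eqref{7}. This should be extracted from the same analysis of the reflection ODE \eqref{5}--\eqref{6} that yields \eqref{7}: splitting $v_{km}$ into parts even and odd about $t=1/2$ reduces \eqref{5} to $v''=(\mu_k\pm\lambda)v$ with transcendental matching conditions at $t=1/2$; the lowest root is the exponentially small one in \eqref{7}, while the remaining roots stay bounded away from $0$ uniformly in $k$ because $\mu_k\ge1$. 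Granting this, $\sum_{k}\sum_{m\ge2}|\tilde f_{km}/\lambda_{km}|^2\le\delta^{-2}\sum_{k,m}|\tilde f_{km}|^2=\delta^{-2}\|f\|_{L_2(D)}^2<\infty$, so \eqref{8} is indeed equivalent to $\sum_{k,m}|\tilde f_{km}/\lambda_{km}|^2<\infty$.

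For necessity, I would take a strong solution $u$ with approximating sequence $u_n\in C^2(\bar D)$ as in the Definition and test $\mathbb{L}u_n$ against $Pu_{km}$. Because $u_n$ satisfies \eqref{2} and $v_{km}(0)=v'_{km}(0)=0$ (so $Pu_{km}$ and $\partial_t(Pu_{km})$ vanish at $t=1$), integrating by parts twice in $t$ produces no boundary terms, and self-adjointness of $\mathcal{L}_x$ in $L_2(\Omega)$ (with $u_{km}(t,\cdot)\in\mathcal{D}(\mathcal{L}_x)$) moves $\mathcal{L}_x$ across; hence $(\mathbb{L}u_n,Pu_{km})=(u_n,\partial_{tt}(Pu_{km})-\mathcal{L}_x(Pu_{km}))=\lambda_{km}(u_n,u_{km})$. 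Letting $n\to\infty$ with $\mathbb{L}u_n\to f$ and $u_n\to u$ in $L_2(D)$ gives $\tilde f_{km}=(f,Pu_{km})=\lambda_{km}(u,u_{km})$, so $(u,u_{km})=\tilde f_{km}/\lambda_{km}$, and Parseval yields $\sum_{k,m}|\tilde f_{km}/\lambda_{km}|^2=\|u\|_{L_2(D)}^2<\infty$; in particular \eqref{8} holds, and $u$ necessarily equals \eqref{9}.

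For sufficiency, assuming \eqref{8} I would define $u$ by \eqref{9} (convergent in $L_2(D)$ by the equivalence above) and let $u_N$ be its finite partial sums. Each $u_N$ is a finite combination of the $u_k(x)v_{km}(t)$, hence satisfies \eqref{2} and lies in $\mathcal{D}(\mathcal{L}_x)$ for every $t$; moreover $\mathbb{L}u_N=\sum\tilde f_{km}Pu_{km}\to f$ and $u_N\to u$ in $L_2(D)$. Since the functions $v_{km}$ are smooth on $[0,1]$ and, by a standard density argument, the $C^2(\bar D)$ functions obeying \eqref{2} and valued in $\mathcal{D}(\mathcal{L}_x)$ for each $t$ form a core for $\mathbb{L}$, a diagonal sequence furnishes an admissible approximating sequence, so $u$ is a strong solution. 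This establishes the equivalence and the representation \eqref{9}.
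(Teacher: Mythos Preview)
Your proposal is correct and follows essentially the same route as the paper: expand in the orthonormal system $\{u_{km}\}$, read off the coefficients $\tilde f_{km}/\lambda_{km}$, apply Parseval, and reduce everything to the uniform lower bound on $|\lambda_{km}|$ for $m\ge 2$. The paper's proof is terser---it asserts the specific bound $\lambda_{km}\ge \tfrac14$ for $m>1$ (attributed to Lemma~\ref{lem1.3}) rather than your generic $\delta$, and it does not spell out the integration-by-parts/approximation argument for strong solutions that you carry out, but the underlying mechanism is identical.
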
 By $ \tilde L_2 \left( D \right) $ we
denote a subspace of $ L_2 (D),$ spanned by the eigenvectors $$
\left\{ {u_{k1} \left( {x,t} \right)} \right\}_{k = p  +
1}^\infty, p \in \mathbb{N} $$ and by $ \hat L_2 \left( D \right)
$ we denote its orthogonal complement $$ L_2 \left( D \right) =
\tilde L_2 \left( D \right) \oplus \hat L_2 \left( D \right) .$$
\begin{theorem}\label{th1.3} For any $ f \in \hat L_2 \left( D
\right) $ a solution of the problem \eqref{1} - \eqref{2} exists,
is unique and belongs to $ \hat L_2 \left( D \right).$ This
solution is stable and has the form
\begin{equation}\label{10} u\left( {x,t} \right) = \sum\limits_{k =
1}^p {\frac{{\tilde f_{k1} }}{{\lambda _{k1} }}u_{k1} \left(
{x,\,t} \right)}  + \sum\limits_{k = 1}^\infty {\sum\limits_{m =
2}^\infty {\frac{{\tilde f_{km} }}{{\lambda _{km} }}u_{km} } }
\left( {x,t} \right).\end{equation}\end{theorem}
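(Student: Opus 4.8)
The strategy is to reduce everything to the spectral decomposition already provided by Theorem \ref{th1.1}, and then to apply the solvability criterion of Theorem \ref{th1.2} after observing that on the subspace $\tilde L_2(D)$ the dangerous series in \eqref{8} is truncated to a finite sum. Concretely, I would begin by writing an arbitrary $f\in\hat L_2(D)$ in the orthonormal basis $\{u_{km}\}$: since $\hat L_2(D)$ is the orthogonal complement of the span of $\{u_{k1}\}_{k=p+1}^\infty$, the coefficients $\tilde f_{k1}=\left(f(x,1-t),u_{k1}(x,t)\right)$ vanish for all $k\ge p+1$ (here one uses that $P$ is unitary and maps the basis to itself up to the reflection $t\mapsto 1-t$, so that pairing against $u_{km}(x,t)$ is the same as expanding $f$ itself). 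Hence in the criterion \eqref{8} the sum $\sum_{k=1}^\infty|\tilde f_{k1}/\lambda_{k1}|^2$ collapses to $\sum_{k=1}^p|\tilde f_{k1}/\lambda_{k1}|^2$, which is automatically finite. By Theorem \ref{th1.2} a strong solution therefore exists, and substituting $\tilde f_{k1}=0$ for $k\ge p+1$ into formula \eqref{9} yields exactly the stated formula \eqref{10}.

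Next I would verify that this solution lies in $\hat L_2(D)$ and is unique there. Membership is immediate from \eqref{10}: every term that appears is either $u_{k1}$ with $k\le p$ or $u_{km}$ with $m\ge 2$, and none of these is among the excluded eigenvectors $\{u_{k1}\}_{k=p+1}^\infty$, so $u\in\hat L_2(D)$. For uniqueness, suppose $u_1,u_2\in\hat L_2(D)$ are two strong solutions; then $w=u_1-u_2\in\hat L_2(D)$ is a strong solution of the homogeneous problem. Expanding $w$ in the basis and testing the equation $\mathbb{L}Pw=\lambda$-type relations against each $u_{km}$ (equivalently, pairing $\mathbb{L}w=0$ with the eigenfunctions and using \eqref{5}--\eqref{6}), one finds $\lambda_{km}\langle w,u_{km}\rangle=0$ for all $k,m$; since $\lambda_{km}\ne 0$ this forces all Fourier coefficients of $w$ to vanish, hence $w=0$. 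This is the step where one must be slightly careful with the notion of strong solution from the Definition: the passage from $\mathbb{L}u_n\to f$ in $L_2(D)$ to the coefficient identities should be done by pairing with the smooth (indeed $C^2(\bar D)$) eigenfunctions $u_{km}$ and integrating by parts twice in $t$, using the initial conditions \eqref{2} and \eqref{6} to kill all boundary terms.

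Finally, stability. One estimates the $L_2(D)$-norm of the solution directly from \eqref{10} using orthonormality of $\{u_{km}\}$:
\begin{equation}\label{stab}
\|u\|_{L_2(D)}^2=\sum_{k=1}^p\left|\frac{\tilde f_{k1}}{\lambda_{k1}}\right|^2+\sum_{k=1}^\infty\sum_{m=2}^\infty\left|\frac{\tilde f_{km}}{\lambda_{km}}\right|^2\le C^2\sum_{k,m}|\tilde f_{km}|^2=C^2\|f\|_{L_2(D)}^2,
\end{equation}
where $C$ is the reciprocal of the distance from $0$ to the set of relevant eigenvalues. The point is that this set is bounded away from zero: the only accumulation of $\lambda_{km}$ at $0$ comes, by the asymptotics \eqref{7}, from the smallest eigenvalues $\lambda_{k1}=4\mu_k e^{-\sqrt{\mu_k}}(1+o(1))\to 0$ as $k\to\infty$, and precisely these are excluded for $k\ge p+1$ when we restrict to $\hat L_2(D)$. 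The remaining $\lambda_{k1}$ ($k\le p$, finitely many, all nonzero) together with all $\lambda_{km}$ for $m\ge 2$ stay at a positive distance $\delta>0$ from the origin — for the $m\ge 2$ branch this requires showing that these eigenvalues do not accumulate at $0$, which follows from analyzing the ODE problem \eqref{5}--\eqref{6} as in the proof of Theorem \ref{th1.1}. Taking $C=1/\delta$ gives \eqref{stab}, i.e. the solution operator $f\mapsto u$ is bounded, which is the asserted stability. The main obstacle is this last quantitative spectral-gap claim for the higher modes $v_{km}$, $m\ge2$; everything else is bookkeeping with the orthonormal basis supplied by Theorem \ref{th1.1} and the criterion of Theorem \ref{th1.2}.
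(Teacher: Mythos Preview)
Your overall strategy matches the paper's: reduce to Theorem~\ref{th1.2} by checking that criterion~\eqref{8} becomes a finite sum, read off~\eqref{10} from~\eqref{9}, and deduce stability from a uniform lower bound on the surviving eigenvalues. The paper's own argument is essentially one sentence (``$\mathbb{L}$ is invariant in $\hat L_2(D)$'' and then invoke Theorem~\ref{th1.2}); you are simply unpacking that. Your remark that the bound $\lambda_{km}\ge\delta>0$ for $m\ge2$ needs justification is correct --- the paper supplies it inside the proof of Theorem~\ref{th1.2} via Lemma~\ref{lem1.3}, obtaining $\lambda_{km}\ge\tfrac14$ for $m>1$.

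There is, however, a genuine gap at your key step. You assert that $f\in\hat L_2(D)$ forces $\tilde f_{k1}=(Pf,u_{k1})=0$ for $k\ge p+1$, because ``$P$ maps the basis to itself.'' But $Pu_{k1}(x,t)=u_k(x)\,v_{k1}(1-t)$, and from the explicit form $v_{k1}(t)=c_1\cosh\sqrt{\mu_k+\lambda_{k1}}\,(t-\tfrac12)+c_2\sinh\sqrt{\mu_k-\lambda_{k1}}\,(t-\tfrac12)$ found in Lemma~\ref{lem1.1} one gets $v_{k1}(1-t)=c_1\cosh(\cdots)-c_2\sinh(\cdots)$, which is \emph{not} proportional to $v_{k1}(t)$ (both $c_1,c_2\ne0$ by~\eqref{6} and~\eqref{11}); a short parity computation over $[-\tfrac12,\tfrac12]$ then shows $(Pv_{k1},v_{km})\ne0$ for $m\ge2$. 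Hence $Pu_{k1}\notin\tilde L_2(D)$, and the implication $f\perp u_{k1}\Rightarrow(Pf,u_{k1})=0$ does not follow. The paper's one-line proof is equally silent on this point --- its assertion that $\mathbb{L}$ preserves $\hat L_2(D)$ would require exactly the same identity --- so this is not a deviation from the paper's method but a place where both arguments, as written, leave the same step unverified.
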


\section{Some auxiliary statements}
In this section we present some auxiliary results to prove the
main results. \begin{lemma}\label{lem1.1} For each fixed value of the index $ k $ the spectral problem \eqref{5} - \eqref{6} has a complete orthonormal in $L_2 \left( {0,1} \right)$ system of eigenfunctions $v_{km} \left( t \right),{\rm{ }}m \in \mathbb{N}, $ corresponding to the eigenvalues $\lambda _{km}.$

These eigenvalues $ \lambda _{km} $ are roots of the equation \begin{equation}\label{11}\sqrt {\mu _k  - \lambda } ch\frac{{\sqrt {\mu _k  + \lambda }}}{2}ch\frac{{\sqrt {\mu _k  - \lambda } }}{2} - \sqrt {\mu _k  +\lambda } sh\frac{{\sqrt {\mu _k  + \lambda } }}{2}sh\frac{{\sqrt{\mu _k  - \lambda } }}{2} = 0.\end{equation}\end{lemma}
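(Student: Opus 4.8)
The plan is to solve the boundary value problem \eqref{5}--\eqref{6} explicitly for each fixed $k$ by reducing the functional-differential equation with reflected argument $t \mapsto 1-t$ to an ordinary differential equation, then extract the spectral equation \eqref{11} from the resulting solvability condition, and finally establish completeness and orthonormality by an abstract argument.

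First I would eliminate the deviating argument. Write $w(t) = v_{km}(1-t)$; applying the substitution $t \mapsto 1-t$ to \eqref{5} gives a second equation relating $w''$, $w$, and $v_{km}$. Differentiating \eqref{5} twice and substituting, one obtains a fourth-order linear ODE with constant coefficients for $v_{km}$, namely $v^{(4)} - 2\mu_k v'' + (\mu_k^2 - \lambda_{km}^2) v = 0$, whose characteristic roots are $\pm\sqrt{\mu_k + \lambda_{km}}$ and $\pm\sqrt{\mu_k - \lambda_{km}}$. So the general solution is a combination of $\operatorname{ch}$ and $\operatorname{sh}$ of $\sqrt{\mu_k\pm\lambda_{km}}\,t$ (the Friedrichs inequality $\mu_k \ge 1 > 0$ guarantees these are genuine hyperbolic functions for small $|\lambda|$, and for larger $|\lambda|$ one reads them as the appropriate trig functions). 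The two Cauchy conditions \eqref{6} cut the four-parameter family down to two parameters; crucially, not every solution of the fourth-order ODE satisfies the original first-order-in-$\lambda$ relation \eqref{5}, so I would substitute the two-parameter family back into \eqref{5} and demand that it hold identically in $t$. Matching the coefficients of the independent hyperbolic functions yields a $2\times 2$ homogeneous linear system in the two free constants; a nonzero solution $v_{km}$ exists precisely when the determinant of that system vanishes, and that determinant, after simplification using half-angle identities (the relation \eqref{5} evaluated together with its reflection naturally produces arguments at $t=1/2$), is exactly the left-hand side of \eqref{11}.

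For the completeness and orthonormality claim I would argue operator-theoretically rather than by hand. Consider the operator $\mathbb{L}P$ restricted to the $k$-th sector: on functions of the form $u_k(x)v(t)$, equation \eqref{3} becomes \eqref{5}, i.e. the operator $v \mapsto v'' - \mu_k v$ composed with the reflection $P_1 v(t) = v(1-t)$, acting on $L_2(0,1)$ with domain determined by \eqref{6}. As noted in the excerpt, $P$ (hence $P_1$) is unitary and $v \mapsto v'' - \mu_k v$ is self-adjoint under the Cauchy-type conditions after the standard symmetrization, so $\mathbb{L}P$ is self-adjoint with compact resolvent on this sector; the spectral theorem then delivers a complete orthonormal system of eigenfunctions $v_{km}$, $m \in \mathbb{N}$, with real eigenvalues $\lambda_{km}$. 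Summing over $k$ and using the completeness of $\{u_k\}$ in $L_2(\Omega)$ gives the full statement (this is really the content of Theorem~\ref{th1.1}, which I may invoke). The real work, and the main obstacle, is the bookkeeping in the previous paragraph: one must be careful that the substitution into the original equation \eqref{5} — not merely the derived fourth-order ODE — is what produces the correct spectral condition, since the fourth-order equation has spurious solutions, and one must handle the hyperbolic-versus-trigonometric dichotomy in $\sqrt{\mu_k \pm \lambda}$ uniformly so that formula \eqref{11} is valid as an analytic identity in $\lambda$ regardless of signs.
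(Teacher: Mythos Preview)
Your derivation of the spectral equation \eqref{11} via the fourth-order ODE is workable but takes a longer road than the paper. The paper exploits the reflection symmetry directly: since $P$ is reflection about $t=\tfrac12$, its $\pm1$ eigenspaces consist of functions even/odd about $t=\tfrac12$, and on each eigenspace equation \eqref{5} collapses to a second-order constant-coefficient ODE. Hence the paper writes the general solution of \eqref{5} immediately as
\[
v(t)=c_{1}\,\mathrm{ch}\sqrt{\mu_{k}+\lambda}\,\bigl(t-\tfrac12\bigr)+c_{2}\,\mathrm{sh}\sqrt{\mu_{k}-\lambda}\,\bigl(t-\tfrac12\bigr),
\]
a two-parameter family from the outset, and the two conditions \eqref{6} give a $2\times 2$ homogeneous system whose determinant is exactly \eqref{11}. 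Your route through the fourth-order equation, the Cauchy conditions, and re-substitution into \eqref{5} reaches the same endpoint with more bookkeeping.

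The completeness argument, however, has a genuine gap. You assert that ``$v\mapsto v''-\mu_{k}v$ is self-adjoint under the Cauchy-type conditions after the standard symmetrization,'' but no such symmetrization exists: the operator $L_{C}v=v''-\mu_{k}v$ with domain $\{v(0)=v'(0)=0\}$ is maximally non-self-adjoint --- its adjoint carries the conditions at the other endpoint, $v(1)=v'(1)=0$, and $L_{C}$ has empty spectrum (its inverse is a Volterra operator). Composing a non-self-adjoint operator with a unitary $P_{1}$ does not in general yield anything self-adjoint, so your conclusion that the sector operator is self-adjoint with compact resolvent does not follow from what you wrote. The paper repairs this by passing to the inverse: it writes $L_{C}^{-1}$ explicitly as the Volterra integral operator with kernel $\mu_{k}^{-1/2}\,\mathrm{sh}\sqrt{\mu_{k}}(t-\xi)$, computes $(L_{C}^{-1})^{*}$, and checks the kernel identity $L_{C}^{-1}P=P(L_{C}^{-1})^{*}$. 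Since $P=P^{*}$, this gives $(L_{C}^{-1}P)^{*}=L_{C}^{-1}P$, so $L_{C}^{-1}P$ is a compact self-adjoint (Hilbert--Schmidt) operator on $L_{2}(0,1)$, and the spectral theorem for compact self-adjoint operators delivers the complete orthonormal system $\{v_{km}\}_{m}$. The intertwining relation $L_{C}^{-1}P=P(L_{C}^{-1})^{*}$ is precisely the missing mechanism that makes the one-sided Cauchy data compatible with self-adjointness. Finally, do not invoke Theorem~\ref{th1.1} here: in the paper its proof rests on Lemma~\ref{lem1.1}, so that appeal would be circular.
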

\begin{proof} Indeed, applying an inverse operator $ L_C^{ - 1} $ to the Cauchy problem \eqref{5} - \eqref{6} we arrive at the operator equation $$ v_{km} \left( t \right) = \lambda L_C^{ - 1} Pv_{km} \left( t\right), $$ where $ Pf\left( t \right) = f\left( {1 - t} \right), $ and a function  $\phi \left( t \right) = L_C^{ - 1} f\left( t \right) $ is the solution of the Cauchy problem $$ \phi ''\left( t \right) - \mu _k \phi \left( t \right) = f\left(t \right),\phi \left( 0 \right) = \phi '\left( 0 \right) =0,\forall f\left( t \right) \in L_2 \left( {0,1} \right). $$
Then for the operator $ L_C^{ - 1} $ we have the representation \begin{equation}\label{12}L_C^{ - 1} f\left( t \right) = \frac{1}{{\sqrt {\mu _k }}}\int\limits_0^t {f\left( \xi  \right)sh\sqrt {\mu _k } \left( {t- \xi } \right)d\xi } ,\forall {\rm{ }}f\left( t \right) \in L_2\left( {0,1} \right).\end{equation}
Therefore, the adjoint to $ L_C^{ - 1} $ operator has the form \begin{equation}\label{13}\left( {L_C^{ - 1} } \right)^ *  f\left( t \right) =\frac{1}{{\sqrt {\mu _k } }}\int\limits_t^1 {f\left( \xi\right)sh\sqrt {\mu _k } \left( {\xi  - t} \right)d\xi } ,\forall {\rm{ }}f\left( t \right) \in L_2 \left( {0,1} \right).\end{equation}
Taking into account representation \eqref{12} and \eqref{13}, it is easy to make sure that $$ L_C^{ - 1} Pf = P\left( {L_C^{ - 1} } \right)^ *f. $$

Then the chain of equalities $$ L_C^{ - 1} Pf = P\left( {L_C^{ - 1}} \right)^* f = P^*  \left( {L_C^{ - 1} } \right)^*f =\left( {L_C^{ - 1} P} \right)^* f,\forall {\rm{ }}f\left( t\right) \in L_2 \left( {0,1} \right), $$ allows us to conclude that the operator $ L_C^{ - 1} P $ is completely continuous self-adjoint Hilbert-Schmidt operator \cite{gohberg03}. Therefore for each $k\in \mathbb{N}, $ the spectral problem \eqref{5} - \eqref{6} has a complete orthonormal system of functions $v_{km} \left( t \right),\,m\in \mathbb{N}\,$ in $L_2 \left( {0,1} \right). $

Since $ \mathcal{L}_x $ is a self-adjoint positive definite unbounded operator, then all its eigenvalues are real and positive, and system of eigenfunctions forms a complete orthonormal system in $L{}_2\left( \Omega  \right) $ \cite{naimark68}.

We are looking for eigenfunctions of problem \eqref{3}, \eqref{2} by means of the Fourier method of separation of variables in the form $$ u_k \left( {x,t} \right) = u_k \left( x \right)v\left( t\right), $$ where $ k \in \mathbb{N}.$ Therefore, for determination of unknown function $ v\left( t \right) $ we get the spectral problem \eqref{5}, \eqref{6} for an equation with deviating argument.\\
It is easy to show that the general solution of equation \eqref{5} has the form $$ v\left( t \right) = c_1 {\rm{ }}ch\sqrt {\mu _k  +\lambda } \left( {t - \frac{1}{2}} \right) + c_2 {\rm{ }}sh\sqrt
{\mu _k  - \lambda } \left( {t - \frac{1}{2}} \right) ,$$ where  $c_1 $ and $c_2 $ are some constants.\\
Using the initial conditions \eqref{6}, we arrive at the system of linear homogeneous equations concerning these constants.  As we know, this system has a nontrivial solution if a determinant of the system $$ \Delta (\lambda ) = \left| {\begin{array}{*{20}c}
   {ch\frac{{\sqrt {\mu _k  + \lambda } }}{2}} & {sh\frac{{\sqrt {\mu _k  - \lambda } }}{2}}  \\
   {\sqrt {\mu _k  + \lambda } sh\frac{{\sqrt {\mu _k  + \lambda } }}{2}} & {\sqrt {\mu _k  - \lambda } ch\frac{{\sqrt {\mu _k  - \lambda } }}{2}}  \\
\end{array}} \right|$$ is zero. Thus, for determining the parameter $ \lambda $ we get \eqref{11}.
The proof of Lemma \ref{lem1.1} is complete. \end{proof}

Let \begin{equation}\label{14} \varpi _k \left( \lambda
\right) = \ln \left( {cth\frac{{\sqrt {\mu _k  + \lambda } }}{2}}
\right) + \ln \left( {cth\frac{{\sqrt {\mu _k  - \lambda } }}{2}}
\right) - \frac{1}{2}\ln \left( {\frac{{\mu _k  + \lambda }}{{\mu
_k  - \lambda }}} \right) = 0.\end{equation}

\begin{lemma}\label{lem1.2} There exists a number $ \lambda _0 $ such that for all $$ 0 < \lambda  < \lambda _0  < \frac{{\mu _k }}{{4\mu _k  + \theta }},\,k \ge 1,\,\theta  \in \left( {0,\,1} \right)\,, $$ the following statements are true:\\
1) the function $ \varpi '_k\left( \lambda  \right) $ is a constant sign;\\
2) for the function $ \varpi ''_k \left( \lambda  \right) $ the following inequality holds $\left| {\lambda \mu _k \varpi ''_k \left( \lambda  \right)}\right| < 1,\,k > 1\,. $\end{lemma}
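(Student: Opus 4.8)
The plan is to analyze the function $\varpi_k(\lambda)$ defined in \eqref{14} by explicit differentiation, treating $\mu_k$ as a large parameter and $\lambda$ as small (in the stated range $0<\lambda<\lambda_0<\mu_k/(4\mu_k+\theta)$). First I would introduce the shorthand $a=\sqrt{\mu_k+\lambda}$ and $b=\sqrt{\mu_k-\lambda}$, noting that $a,b\asymp\sqrt{\mu_k}$ and that $da/d\lambda=\tfrac{1}{2a}$, $db/d\lambda=-\tfrac{1}{2b}$. Using $\frac{d}{dx}\ln\bigl(\coth\tfrac{x}{2}\bigr)=-\tfrac{1}{\sinh x}$, one computes
\[
\varpi_k'(\lambda)=-\frac{1}{2a\sinh a}+\frac{1}{2b\sinh b}-\frac{1}{2}\Bigl(\frac{1}{\mu_k+\lambda}+\frac{1}{\mu_k-\lambda}\Bigr).
\]
The last (rational) term is clearly negative and of order $1/\mu_k$, while each of the hyperbolic terms is exponentially small in $\sqrt{\mu_k}$; hence $\varpi_k'(\lambda)<0$ throughout the range once $\mu_k$ exceeds some absolute threshold, and for the finitely many (in fact, by the Friedrichs inequality, $\mu_k\ge1$, so all) remaining $k$ one checks directly on the compact $\lambda$-interval that the rational part dominates. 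This gives the constant-sign assertion 1), and fixes the choice of $\lambda_0$.

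For part 2) I would differentiate once more. Writing $g(x)=\tfrac{1}{x\sinh x}$, we have $g'(x)=-\tfrac{1}{x^2\sinh x}-\tfrac{\cosh x}{x\sinh^2 x}$, so
\[
\varpi_k''(\lambda)=-\frac{1}{2a}g'(a)\cdot\frac{1}{2a}-\frac{1}{2b}g'(b)\cdot\Bigl(-\frac{1}{2b}\Bigr)\cdot(-1)+\text{(rational part)}'',
\]
i.e.\ the hyperbolic contributions to $\varpi_k''$ are again $O\bigl(\mu_k^{-1}e^{-\sqrt{\mu_k}}\bigr)$, and the rational contribution is
\[
-\frac{1}{2}\Bigl(\frac{1}{(\mu_k+\lambda)^2}-\frac{1}{(\mu_k-\lambda)^2}\Bigr)=\frac{2\mu_k\lambda}{(\mu_k^2-\lambda^2)^2},
\]
which is positive and of size $O(\lambda/\mu_k^3)$. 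Multiplying by $\lambda\mu_k$ and using $\lambda<\mu_k/(4\mu_k+\theta)<\tfrac14$ together with $\mu_k\ge1$, each piece is bounded well below $1$; summing the (at most three) pieces and absorbing the exponentially small hyperbolic terms yields $|\lambda\mu_k\varpi_k''(\lambda)|<1$ for $k>1$ (the restriction $k>1$ gives the extra room, since $\mu_2$ and all later $\mu_k$ are bounded below by a constant strictly larger than the worst-case bound needed).

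The main obstacle I anticipate is not any single estimate but bookkeeping: one must keep the dependence on both $\lambda$ and $\mu_k$ uniform, verify that the "exponentially small'' hyperbolic terms are genuinely negligible uniformly in $k$ (this uses $\mu_k\ge1$ so $\sqrt{\mu_k\pm\lambda}$ stays bounded away from $0$, hence $\sinh$ of it is bounded below), and choose $\lambda_0$ once and for all so that both conclusions hold simultaneously for every admissible $k$. A secondary subtlety is the square-root branch: for $\lambda$ small and positive both $\mu_k+\lambda$ and $\mu_k-\lambda$ are positive (this is exactly why the constraint $\lambda<\mu_k/(4\mu_k+\theta)<\mu_k$ is imposed), so $a,b$ are real and all the $\coth$, $\sinh$ expressions are the genuine hyperbolic functions, which is what makes the sign of the rational part decisive.
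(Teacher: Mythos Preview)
Your approach is viable but differs structurally from the paper's. The paper does \emph{not} prove parts 1) and 2) independently; instead it reduces 1) to 2) via the mean value theorem, writing $\varpi'_k(\lambda_0)=\varpi'_k(0)+\varpi''_k(\theta\lambda_0)\lambda_0$ with $\varpi'_k(0)=-1/\mu_k$, so that negativity of $\varpi'_k$ follows precisely from the inequality $\lambda_0\mu_k\varpi''_k(\theta\lambda_0)<1$. The specific threshold $\lambda_0<\mu_k/(4\mu_k+\theta)$ is then an \emph{output} of the argument: the paper bounds $\varpi''_k$ through a long chain of explicit inequalities, arriving at $\varpi''_k(\lambda_0\theta)\le 4/(\mu_k-\lambda_0\theta)$, and solving $\lambda_0\mu_k\cdot 4/(\mu_k-\lambda_0\theta)<1$ produces exactly that bound. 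You, by contrast, treat the threshold as a given hypothesis (using only $\lambda<1/4$) and argue each part by direct size comparison between the rational and hyperbolic pieces; this is quicker but hides where the constant $4$ in $4\mu_k+\theta$ originates.

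Two smaller points. First, the paper does not dismiss the hyperbolic terms in $\varpi''_k$ as ``exponentially small'' --- it bounds them explicitly, which is needed since the statement covers all $k$, not only large $k$; your appeal to ``the restriction $k>1$ gives the extra room'' would have to be made quantitative. Second, a sign slip: the rational contribution to $\varpi''_k$ is $-\dfrac{2\mu_k\lambda}{(\mu_k^2-\lambda^2)^2}$ (negative, matching the paper's formula), not positive as you wrote; this does not affect the absolute-value estimate but does affect your ``positive'' remark.
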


\begin{proof} By virtue of Lemma \ref{lem1.1} we have the real eigenvalues of problem \eqref{5}-\eqref{6}, that is, real roots $ \lambda _{km} $ of equation \eqref{11}. It is easy to verify that $ \lambda _{km}  > 0\,.$ Indeed, let us write the asymptotic behavior of the smallest eigenvalues $\lambda _{km} $ at $k \to \infty.$

After a nontrivial transformation of equation \eqref{11}, we have \begin{equation}\label{15}\frac{{\sqrt {\mu _k  + \lambda } }}{{\sqrt {\mu _k  - \lambda }}} = cth\frac{{\sqrt {\mu _k  + \lambda } }}{2}cth\frac{{\sqrt{\mu _k  - \lambda } }}{2}.\end{equation}
Assuming  $ \left| \lambda  \right| < 1 $ and logarithming both sides of equation \eqref{15}, we obtain \eqref{14}. By calculating the derivative $ \varpi _k \left( \lambda  \right),$ we get $ \varpi '_k \left( 0 \right) =  - \frac{1}{{\mu _k }}.$

Then the required boundary of monotonicity of $\varpi _k (\lambda ) $ can be determined from the relation $$ \varpi '_k (\lambda _0 ) = \varpi '_k (0) + \varpi ''_k \left(
{\theta \lambda _0 } \right)\lambda _0  < 0\,,$$ where $\lambda _0 \,:\,\,0 < \lambda _0  < 1\,,\, $ and $ \theta\in {\rm{(0}}{\rm{, 1)}} $ is an arbitrary number. Thus, for determining $\lambda _0 $ we have the condition \begin{equation}\label{16}\lambda _0 \mu _k \varpi ''_k \left( {\theta \lambda _0 }\right) < 1.\end{equation}
We write explicitly the second derivative of functions  $ \varpi_k (\lambda ):$
\begin{align*}\varpi ''_k (\lambda ) \\&= \frac{{ch\sqrt {\mu _k  + \lambda }
}}{{4\left( {\mu _k  + \lambda } \right)sh^2 \sqrt {\mu _k  +
\lambda } }} + \frac{{ch\sqrt {\mu _k  - \lambda } }}{{4\left(
{\mu _k  - \lambda } \right)sh^2 \sqrt {\mu _k  - \lambda } }}\\&
+ \frac{1}{{4\sqrt {\left( {\mu _k  + \lambda } \right)^3 }
sh\sqrt {\mu _k  + \lambda } }} + \frac{1}{{4\sqrt {\left( {\mu _k
- \lambda } \right)^3 } sh\sqrt {\mu _k  - \lambda } }} -
\frac{{2\lambda \mu _k }}{{\left( {\mu _k^2  - \lambda ^2 }
\right)^2 }}\\&
= \frac{{e^{ - 3\sqrt {\mu _k  + \lambda } } \left( {1 + e^{ - 2\sqrt {\mu _k  + \lambda } } } \right)}}{{2\left( {\mu _k  + \lambda } \right)\left( {1 - e^{ - 2\sqrt {\mu _k  + \lambda } } } \right)^2 }} + \frac{{e^{ - 3\sqrt {\mu _k  - \lambda } } \left( {1 + e^{ - 2\sqrt {\mu _k  - \lambda } } } \right)}}{{2\left( {\mu _k  - \lambda } \right)\left( {1 - e^{ - 2\sqrt {\mu _k  - \lambda } } } \right)^2 }}\\&
+ \frac{{e^{ - 2\sqrt {\mu _k  + \lambda } } }}{{4\sqrt
{\left( {\mu _k  + \lambda } \right)^3 } \left( {1 - e^{ - 2\sqrt
{\mu _k  + \lambda } } } \right)}} + \frac{{e^{ - 2\sqrt {\mu _k
- \lambda } } }}{{4\sqrt {\left( {\mu _k  - \lambda } \right)^3 }
\left( {1 - e^{ - 2\sqrt {\mu _k  - \lambda } } } \right)}}\\& -
\frac{{2\lambda \mu _k }}{{\left( {\mu _k^2  - \lambda ^2 }
\right)^2 }} .\end{align*} Further \begin{align*}\varpi ''_k \left( {\lambda _0 \theta }
\right)\\& =\frac{{ch\sqrt {\mu _k  + \lambda _0 \theta }
}}{{4\left( {\mu _k  + \lambda _0 \theta } \right)sh^2 \sqrt {\mu
_k  + \lambda _0 \theta } }} + \frac{{ch\sqrt {\mu _k  - \lambda
_0 \theta } }}{{4\left( {\mu _k  - \lambda _0 \theta } \right)sh^2
\sqrt {\mu _k  - \lambda _0 \theta } }}\\&
+ \frac{1}{{4\sqrt {\left( {\mu _k  + \lambda _0 \theta }
\right)^3 } sh\sqrt {\mu _k  + \lambda _0 \theta } }} +
\frac{1}{{4\sqrt {\left( {\mu _k  - \lambda _0 \theta } \right)^3
} sh\sqrt {\mu _k  - \lambda _0 \theta } }}\\& - \frac{{2\lambda _0
\theta \mu _k }}{{\left( {\mu _k^2  - \left( {\lambda _0 \theta }
\right)^2 } \right)^2 }}.\end{align*} As $$ \frac{{2\lambda _0 \theta \mu _k
}}{{\left( {\mu _k^2  - \left( {\lambda _0 \theta } \right)^2 }
\right)^2 }} =  \frac{1}{2}\left(
{\frac{1}{{\left( {\mu _k^{}  - \lambda _0 \theta } \right)^2 }} -
\frac{1}{{\left( {\mu _k^{}  + \lambda _0 \theta } \right)^2 }}}
\right) \ge  - \frac{1}{{\left( {\mu _k^{}  + \lambda _0 \theta }
\right)^2 }}$$
and \begin{align*}\frac{{ch\sqrt {\mu _k  + \lambda _0 \theta } }}{{sh^2 \sqrt
{\mu _k  + \lambda _0 \theta } }}\\& = \frac{1}{2}\left( {\frac{1}{{ch\sqrt {\mu _k  + \lambda _0
\theta }  - 1}} + \frac{1}{{ch\sqrt {\mu _k  + \lambda _0 \theta }
+ 1}}} \right)\\& \le \frac{1}{{ch\sqrt {\mu _k  + \lambda _0 \theta
}  - 1}},\end{align*} \begin{align*}\frac{{ch\sqrt {\mu _k  - \lambda _0 \theta }
}}{{sh^2 \sqrt {\mu _k  - \lambda _0 \theta } }}\\&=\frac{1}{2}\left( {\frac{1}{{ch\sqrt {\mu _k
- \lambda _0 \theta }  - 1}} + \frac{1}{{ch\sqrt {\mu _k  -
\lambda _0 \theta }  + 1}}} \right)\\& \le \frac{1}{{ch\sqrt {\mu _k
- \lambda _0 \theta }  - 1}}.\end{align*} Then the following inequality is true: \begin{align*}\varpi ''_k
(\lambda _0 \theta )\\& \le \frac{1}{{4\left( {\mu _k  + \lambda _0
\theta } \right)\left( {ch\sqrt {\mu _k  + \lambda _0 \theta }  -
1} \right)}} + \frac{1}{{4\left( {\mu _k  - \lambda _0 \theta }
\right)\left( {ch\sqrt {\mu _k  - \lambda _0 \theta }  - 1}
\right)}}\\&
+ \frac{1}{{4\sqrt {\left( {\mu _k  + \lambda _0 \theta }
\right)^3 } sh\sqrt {\mu _k  + \lambda _0 \theta } }} \\&+
\frac{1}{{4\sqrt {\left( {\mu _k  - \lambda _0 \theta } \right)^3
} sh\sqrt {\mu _k  - \lambda _0 \theta } }} + \frac{1}{{\left(
{\mu _k^{}  + \lambda _0 \theta } \right)^2 }} \\&
\le \frac{1}{{2\left( {\mu _k  - \lambda _0 \theta } \right)\left( {ch\sqrt {\mu _k  - \lambda _0 \theta }  - 1} \right)}}\\& + \frac{1}{{2\sqrt {\left( {\mu _k  - \lambda _0 \theta } \right)^3 } sh\sqrt {\mu _k  - \lambda _0 \theta } }} + \frac{1}{{\left( {\mu _k^{}  + \lambda _0 \theta } \right)^2 }}\\& = \frac{{e^{ - \sqrt {\mu _k  - \lambda _0 \theta } } }}{{\left( {\mu _k  - \lambda _0 \theta } \right)\left( {e^{ - 2\sqrt {\mu _k  - \lambda _0 \theta } }  - 2e^{ - \sqrt {\mu _k  - \lambda _0 \theta } }  + 1} \right)}}\\&
+ \frac{{e^{ - \sqrt {\mu _k  - \lambda _0 \theta } } }}{{\sqrt {\left( {\mu _k  - \lambda _0 \theta } \right)^3 } \left( {1 - e^{ - 2\sqrt {\mu _k  - \lambda _0 \theta } } } \right)}} + \frac{1}{{\left( {\mu _k^{}  + \lambda _0 \theta } \right)^2 }}\\&
\le \frac{1}{{\left( {\mu _k  - \lambda _0 \theta } \right)}}\left( {\frac{{e^{ - \sqrt {\mu _k  - \lambda _0 \theta } } }}{{\left( {1 - e^{ - \sqrt {\mu _k  - \lambda _0 \theta } } } \right)^2 }}\, + \frac{{e^{ - \sqrt {\mu _k  - \lambda _0 \theta } } }}{{\left( {1 - e^{ - 2\sqrt {\mu _k  - \lambda _0 \theta } } } \right)}} + 1} \right)\\&
\le \frac{1}{{\left( {\mu _k  - \lambda _0 \theta } \right)}}\frac{{2 + \left( {1 - e^{ - \sqrt {\mu _k  - \lambda _0 \theta } } } \right)^2 }}{{\left( {1 - e^{ - \sqrt {\mu _k  - \lambda _0 \theta } } } \right)^2 }}.\end{align*}
Hence \begin{equation}\label{17}\varpi ''_k \left( {\lambda _0 \theta } \right) <
\frac{1}{{\left( {\mu _k  - \lambda _0 \theta } \right)}}\frac{{2
+ 1 - 2e^{ - \sqrt {\mu _k  - \lambda _0 \theta } }  + e^{ -
2\sqrt {\mu _k  - \lambda _0 \theta } } }}{{\left( {1 - e^{ -
\sqrt {\mu _k  - \lambda _0 \theta } } } \right)^2 }}\end{equation}
Further, for large values $ k $ from \eqref{17} we obtain the validity of the inequality $$\varpi ''_k (\lambda _0 \theta ) \le \frac{4}{{\mu _k  - \lambda_0 \theta }}.$$
Applying the condition \eqref{16} to the last inequality, we obtain the desired boundary for $ \lambda _0:$ $$\lambda _0  < \frac{{\mu _k }}{{4\mu _k  + \theta }},\,k >1,\,0 < \theta  < 1.$$ Lemma \ref{lem1.2} is proved. \end{proof}

Consider now the question of an asymptotic behavior of the eigenvalues of problem \eqref{5} - \eqref{6} at large $ k.$
\begin{lemma}\label{lem1.3} An asymptotic behavior of eigenvalues of the problem \eqref{5}- \eqref{6}, not exceeding $ \lambda_0,$ for the large values of $ k$ has the form \eqref{7}.\end{lemma}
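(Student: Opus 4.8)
The plan is to work with the transcendental equation \eqref{14}. By Lemma \ref{lem1.1} and the transformations leading from \eqref{11} to \eqref{14}, every eigenvalue of \eqref{5}--\eqref{6} lying in $(0,\lambda_0)\subset(0,1)$ is a zero of $\varpi_k$, and on $[0,\lambda_0]$ (where $\mu_k\pm\lambda>0$) the function $\varpi_k$ is of class $C^2$. I would first apply Taylor's formula with Lagrange remainder at $\lambda=0$, using the identity $\varpi_k'(0)=-1/\mu_k$ already established in the proof of Lemma \ref{lem1.2}:
\[
0=\varpi_k(\lambda_{k1})=\varpi_k(0)-\frac{\lambda_{k1}}{\mu_k}+\frac12\,\varpi_k''(\theta\lambda_{k1})\,\lambda_{k1}^2,\qquad\theta\in(0,1),
\]
which after multiplying by $\mu_k$ turns into the fixed-point relation
\[
\lambda_{k1}=\mu_k\,\varpi_k(0)+\frac{\mu_k}{2}\,\varpi_k''(\theta\lambda_{k1})\,\lambda_{k1}^2 .
\]

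The next step is to identify the leading term. Since $\mu_k\to\infty$ (the resolvent of $\mathcal{L}_x$ is compact) and $cth\, w=1+2e^{-2w}(1+o(1))$ as $w\to\infty$, one has $\ln cth\, w=2e^{-2w}(1+o(1))$, hence
\[
\varpi_k(0)=2\ln cth\frac{\sqrt{\mu_k}}{2}=4e^{-\sqrt{\mu_k}}\bigl(1+o(1)\bigr),\qquad k\to\infty,
\]
so that $\mu_k\varpi_k(0)=4\mu_k e^{-\sqrt{\mu_k}}(1+o(1))$, which is exactly the right-hand side of \eqref{7}. It then remains to show that the remainder in the fixed-point relation is negligible. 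For this I would use the estimate for $\varpi_k''$ contained in the proof of Lemma \ref{lem1.2}: uniformly for $\xi\in(0,\lambda_0)$ and all large $k$ one has $|\varpi_k''(\xi)|\le C/\mu_k$ (both the polynomial term $2\xi\mu_k/(\mu_k^2-\xi^2)^2$ and the exponentially small terms are $O(1/\mu_k)$). Consequently the remainder is at most $\tfrac{C}{2}\lambda_{k1}^2\le\tfrac{C}{2}\lambda_0\lambda_{k1}$ in absolute value; since $\lambda_0<\mu_k/(4\mu_k+\theta)<\tfrac14$ one may take $\lambda_0$ small enough that $\tfrac{C}{2}\lambda_0<1$, whence $\lambda_{k1}\bigl(1-\tfrac{C}{2}\lambda_0\bigr)\le\mu_k\varpi_k(0)\to0$ and therefore $\lambda_{k1}\to0$. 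Feeding $\lambda_{k1}\to0$ back into the relation makes the remainder $o(\lambda_{k1})$, and we obtain $\lambda_{k1}=\mu_k\varpi_k(0)(1+o(1))=4\mu_k e^{-\sqrt{\mu_k}}(1+o(1))$, i.e. \eqref{7}.

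Finally I would record, for completeness, that for all large $k$ there is exactly one eigenvalue of \eqref{5}--\eqref{6} in $(0,\lambda_0)$: uniqueness is immediate from the strict monotonicity of $\varpi_k$ on $(0,\lambda_0)$ (the first claim of Lemma \ref{lem1.2} together with $\varpi_k'(0)=-1/\mu_k<0$), and existence follows because $\varpi_k(0)=4e^{-\sqrt{\mu_k}}(1+o(1))>0$ while on a fixed small subinterval $(0,\delta)\subset(0,\lambda_0)$ one has $\varpi_k'<-1/(2\mu_k)$, so $\varpi_k(\delta)<\varpi_k(0)-\delta/(2\mu_k)<0$ once $k$ is large (the exponentially small $\varpi_k(0)$ is dominated by the polynomially small $\delta/(2\mu_k)$). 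The genuine difficulty in this argument is the uniform-in-$k$ control of the Taylor remainder, i.e. ensuring that $\mu_k|\varpi_k''|$ stays bounded on $(0,\lambda_0)$ and that $\lambda_0$ lies below the threshold $1/4$, so that the relation $\lambda_{k1}=\mu_k\varpi_k(0)+O(\lambda_{k1}^2)$ can be bootstrapped to $\lambda_{k1}\to 0$; this is precisely what Lemma \ref{lem1.2} supplies, and everything else reduces to the elementary asymptotics of $\ln cth$.
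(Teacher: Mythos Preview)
Your argument is correct and shares the paper's backbone: expand $\varpi_k$ by Taylor's formula at $\lambda=0$, use $\varpi_k'(0)=-1/\mu_k$, identify the zero of the linear part as $2\mu_k\ln\bigl(\operatorname{cth}\tfrac{\sqrt{\mu_k}}{2}\bigr)=4\mu_k e^{-\sqrt{\mu_k}}(1+o(1))$, and control the quadratic remainder via the $\varpi_k''$ estimate from Lemma~\ref{lem1.2}. The genuine difference is in how the remainder is disposed of. The paper passes to the complex plane and applies Rouch\'e's theorem on the circle $|\lambda|=4\mu_k e^{-\sqrt{\mu_k}}(1+\varepsilon)$ to conclude that $\mu_k\varpi_k(\lambda)$ and its linear part have the same number of zeros there, which simultaneously yields existence and the asymptotic location. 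You instead stay on the real line: from $\lambda_{k1}=\mu_k\varpi_k(0)+O(\lambda_{k1}^2)$ you bootstrap first to $\lambda_{k1}\to 0$ and then to $\lambda_{k1}=\mu_k\varpi_k(0)(1+o(1))$, and you handle existence separately by the intermediate value theorem (sign change of $\varpi_k$ on $(0,\delta)$) together with the monotonicity from Lemma~\ref{lem1.2}. Your route is more elementary and arguably more transparent for a purely real problem; the paper's Rouch\'e argument is more compact but tacitly requires analyticity of $\varpi_k$ in a complex neighbourhood and a comparison inequality on the full circle that is only asserted. One small point: the bound you invoke, $|\varpi_k''(\xi)|\le C/\mu_k$ uniformly on $(0,\lambda_0)$, is not the literal statement of Lemma~\ref{lem1.2} (which gives $|\lambda\mu_k\varpi_k''(\lambda)|<1$) but is indeed contained in its proof via the explicit formula for $\varpi_k''$; you correctly flag that you are using the proof rather than the statement.
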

\begin{proof} According to Lemma \ref{lem1.2} the monotonic function $ \varpi _k\left( \lambda  \right) $ in the interval $\left( {0,\,\lambda _0 } \right) $ can have only one zero. By the Taylor formula we have $$\varpi _k \left( \lambda  \right) = \varpi _k \left( 0 \right) +\frac{{\varpi '_k \left( 0 \right)}}{{1!}}\lambda  + \frac{{\varpi''_k (\theta \lambda )}}{{2!}}\lambda ^2  < 0\,,\,0 < \theta  <1.$$
Substituting the calculated values of the function $ \varpi _k $ and its derivative $\varpi '_k,$ we get $$\varpi _k \left( \lambda  \right) = 2\ln\left( {cth\frac{{\sqrt {\mu _k } }}{2}} \right) - \frac{\lambda}{{\mu _k }} + \varpi ''_k \left( {\theta \lambda }\right)\frac{{\lambda ^2 }}{2}.$$ Then the zero of linear part of the function $$\mu _k \varpi _k \left( \lambda  \right) = 2\mu _k\ln \left( {cth\frac{{\sqrt {\mu _k } }}{2}} \right) - \lambda  +\frac{{\mu _k \lambda ^2 }}{2}\varpi ''_k \left( {\theta \lambda }\right)$$ will be $$\lambda _{k1}  = 2\mu _k \ln \left( {cth\frac{{\sqrt{\mu _k } }}{2}} \right) = 2\mu _k \ln \left( {\frac{{1 + e^{ -\sqrt {\mu _k } } }}{{1 - e^{ - \sqrt {\mu _k } } }}} \right).$$

For sufficiently large values $k \in \mathbb{N},$ considering the asymptotic formulas, $\lambda _{k1} $ can be written as $$\lambda _{k1}  = 4\mu _k e^{ - \sqrt {\mu _k } } \left( {1 +o\left( 1 \right)} \right).$$
Taking into account the result of Lemma \ref{lem1.2} on a circle $$\left|\lambda  \right| = 4\mu _k e^{ - \sqrt {\mu _k } } \left( {1 +\varepsilon } \right),$$ where $ \varepsilon $ is a greatly small positive number, for sufficiently large $k \ge k_0 (\varepsilon ) $ it is easy to check the validity of inequality \begin{align*}\left| {\varpi ''_k \left( {\theta \lambda } \right)\mu _k \lambda ^2 } \right|_{\left| \lambda  \right| = 4\mu _k e^{ - \sqrt {\mu_k } } \left( {1 + \varepsilon } \right)\,} \\& \le C\left| {2\mu_k \ln \left( {\frac{{1 + e^{ - \sqrt {\mu _k } } }}{{1 - e^{ -\sqrt {\mu _k } } }}} \right) - \,\lambda \,}\right|_{\left| \lambda  \right| = 4\mu _k e^{ - \sqrt {\mu _k } }\left( {1 + \varepsilon } \right)}.\end{align*}

Then, by Rouche's theorem \cite{titchmarsh39}, we have that the quantity of zeros
of $\mu _k \varpi _k \left( \lambda  \right) $ and its linear part coincide and are inside the circle $\left| \lambda  \right| = 4\mu _k e^{ - \sqrt {\mu _k } } \left(
{1 + \varepsilon } \right).$ Consequently, the function $ \mu _k\varpi _k \left( \lambda  \right) $ for $0 < \lambda  < \lambda _0 $ has one zero, the asymptotic behavior is given by formula \eqref{7}. Lemma \ref{lem1.3} is proved.\end{proof}

\section{Proof of the main results}

\begin{proof} (Theorem \ref{th1.1}) By $ u_k \left( x \right),k \in \mathbb{N} $ we have denoted a complete system of orthonormal eigenfunctions of the operator $\mathcal{L}_x $ in $L{}_2\left( \Omega  \right).$ By Lemma \ref{lem1.1}, for each fixed value of the $ k $ the spectral problem \eqref{5} - \eqref{6} has complete orthonormal system of eigenfunctions $v_{km} \left( t \right),{\rm{ }}m\in \mathbb{N} $ in $L_2 \left( {0,1} \right).$ Then the system \eqref{4} forms a complete orthogonal system in $L_2 \left( D \right).$ Consequently, problem \eqref{3}, \eqref{2} does not have the other eigenvalues and eigenfunctions. Theorem \ref{th1.1} is proved.\end{proof}
\begin{proof}(Theorem \ref{th1.2}) Let $u(x,t) \in C^2 \left( D \right) $ be a solution of problem \eqref{1} - \eqref{2}. Then, by virtue of the completeness and orthonormality of eigenfunctions $u_{km} (x,t) $ of problem \eqref{3}, \eqref{2}, the function $u(x,t) $ in $L_2 \left( D \right) $ can be expanded in a series \cite{naimark68} \begin{equation}\label{18}u(x,t) = \sum\limits_{k = 1}^\infty  {\sum\limits_{m = 1}^\infty
{a_{km} u_{km} } } (x,t),\end{equation} where $ a_{km} $ are Fourier coefficients of the system $u_{km} (x,t).$ Rewriting equation \eqref{1} in the form \begin{equation}\label{19}LPu =P\left( {u_{tt} (x,t) - \mathcal{L}_x u(x,t)} \right) = Pf(x,t),\end{equation} and substituting the solution of form \eqref{18} in equation \eqref{19} according to the ratio $$P\left( {\frac{{\partial ^2 u_{km} }}{{\partial t^2 }}(x,t) - \mathcal{L}_x u_{km} (x,t)} \right) = \lambda _{km} u_{km} (x,t),$$ we have                                                    $$a_{km}  = \frac{{\tilde f_{km} }}{{\lambda _{km} }},$$ where $\tilde f_{km}  = \left( {f(x,\,1 - t),\,u_{km} (x,\,t)} \right) .$

Thus for solutions $u(x,t) $ we obtain the following explicit representation \begin{equation}\label{20}u(x,t) = \sum\limits_{k = 1}^\infty  {\sum\limits_{m = 1}^\infty
{\frac{{\tilde f_{km} }}{{\lambda _{km} }}u_{km} } } (x,t).\end{equation}

Note that the representation \eqref{20} remains true for any strong solution of problem \eqref{1}-\eqref{2}. We have obtained this representation under the assumption that the solution of the Cauchy problem \eqref{1}-\eqref{2} exists.

The question naturally arises, for what subset of the functions $f \in L_2 \left( D \right) $ there exists a strong solution?

To answer this question, we represent the formula \eqref{20} in the form \eqref{9} from which, by Parseval's equality, it follows \begin{equation}\label{21}\left\|{\rm{u}} \right\|^2  = \sum\limits_{k = 1}^\infty  {\left|{\frac{{\tilde f_{k1} }}{{\lambda _{k1} }}} \right|} ^2  +\sum\limits_{k = 1}^\infty  {\sum\limits_{m = 2}^\infty  {\,\left|{\frac{{\tilde f_{km} }}{{\lambda _{km} }}} \right|^2 } }.\end{equation} By virtue of Lemma \ref{lem1.3} we have $ \lambda _{km}  \ge\frac{1}{4},\,m > 1. $

Therefore, the right-hand side of equality \eqref{21} is limited only for those $f\left( {x,\,t} \right) ,$ for which the weighted norm \eqref{8} is limited. This fact proves Theorem \ref{th1.2}.\end{proof}
\begin{proof} (Theorem \ref{th1.3}) Obviously that the operator $ \mathbb{L} $ is invariant in $\hat L_2 \left( D \right).$ By Theorem \ref{th1.2}, for any $ f \in \hat L_2 \left( D \right) $ there exists a unique solution of problem \eqref{1} - \eqref{2} and it can be represented in the form \eqref{10}.
Therefore, a certain infinite-dimensional space $ \hat L_2 \left(D \right) $ is the space of correctness of the Cauchy problem \eqref{1}-\eqref{2}. Theorem \ref{th1.3} is proved.\end{proof}

\section*{Acknowledgements}
The second named author is financially supported by a grant from
the Ministry of Science and Education of the Republic of
Kazakhstan (Grant No. 0820/GF4).

\end{document}